\newtheorem{theorem}{Theorem}
\newtheorem{lemma}[theorem]{Lemma}
\newtheorem{proposition}[theorem]{Proposition}
\newtheorem{corollary}[theorem]{Corollary}
\theoremstyle{definition}
\newtheorem{definition}[theorem]{Definition}
\theoremstyle{remark}
\newtheorem{remark}[theorem]{Remark}
\theoremstyle{remark}
\newtheorem{example}[theorem]{Example}
 \numberwithin{equation}{section}
\numberwithin{theorem}{section}
\title{Descent-Inversion Statistics in  Riffle Shuffles}
\author{{\"{U}mit I\c{s}lak}{USC}}
\date{March 10, 2013}
\begin{document}

 \maketitle

\begin{center}

\Small{\"{U}M\.{I}T I\c{S}LAK}

\end{center}
\begin{abstract}
This paper studies statistics of riffle shuffles by relating them to
 random word statistics with the use of inverse shuffles. Asymptotic normality of the number of descents and inversions in
riffle shuffles with convergence rates of order $1/\sqrt{n}$ in the
Kolmogorov distance are proven. Results are also given about the
lengths of the longest alternating subsequences of random
permutations resulting from riffle shuffles. A sketch of how the
theory of multisets can be useful  for statistics of a variation of
top $m$ to random shuffles is presented.
\end{abstract}

\section{Introduction}

For a sequence $\textbf{x}=(x_1,...,x_n)$ of real numbers,  the
number of descents and inversions are  defined as
$des(\textbf{x})=\sum_{i=1}^{n-1} \mathbbm{1}(x_i>x_{i+1})$ and
$inv(\textbf{x})=\sum_{i<j} \mathbbm{1}(x_i > x_j)$, respectively.
For a permutation $\pi$ in the symmetric group $S_n$, we write
$des(\pi)$ for the number of descents in the sequence
$(\pi(1),\pi(2),...,\pi(n)).$ Similar notation will be used for
inversions and other permutation statistics. In this paper, we will
analyze $des(\rho)$ and $inv(\rho)$ when $\rho$ is a random
permutation with riffle shuffle distribution (which is defined in
the next section precisely), and will discuss some other related
problems.

Our interest in descent-inversion statistics in riffle shuffles
started with the following elementary observation for uniformly
random permutations: Let $\pi$ be a uniformly random permutation in
$S_n$ and $\mathbb{X}=(X_1,...,X_n)$ be a random vector where
$X_i$'s are independent and identically distributed (i.i.d.)
$U(0,1)$ random variables. For $i=1,...,n$, let $R_i$ and $R_i'$ be
the ranks of $\pi(i)$ and $X_i$ in $(\pi(1),...,\pi(n))$ and
$(X_1,...,X_n)$ respectively. Then $(R_1,...,R_n)=_d
(R_1',...,R_n')$ where $=_d$ denotes equality in distribution.

This simple result, which can be proven by a simple induction (or,
by a measure theoretic argument as in \cite{houdre}), makes it
easier to study problems regarding uniform permutation statistics by
transforming them into independent $U(0,1)$ random variable
statistics. As an example, we have $inv(\pi) =_d \sum_{i< j}
\mathbbm{1}(X_i
> X_j)$ giving an alternative representation of
$inv(\pi)$ that can be quite useful for asymptotic problems.

A natural question at this point is: What would $\sum_{i< j}
\mathbbm{1}(X_i > X_j)$ represent if $X_i'$s were instead i.i.d.
over $[a]:=\{1,...,a\}$ with distribution $\textbf{p}=(p_1,...,p_a)$
where $a \geq 2$? Recently, Bliem and Kousidis \cite{kousidis} and
Janson \cite{Janson} considered this problem in terms of the
generalized Galois numbers and provided several different
probabilistic explanations.

In this paper, we give a different interpretation of this using
random permutations which is analogous to the discussion given above
for uniformly random permutations. This time, the equivalent
distribution turns out to be a biased riffle shuffle with $a$ hands.
Using this transformation, we are able to obtain asymptotic
normality of the number of inversions in riffle shuffles (which was
questioned in \cite{fulman}, pg 10) with convergence rates,  and
also understand some other related statistics.

The organization of this paper is as follows. Section 2 provides
background in riffle shuffles and makes the connection to random
words using inverse shuffles. It also discusses how similar results
can be obtained for a variation of top $m$ to random shuffles.
Section 3 treats  the asymptotic distribution of the number of
descents and inversions in riffle shuffles. Section 4 provides
asymptotic results for the lengths of longest alternating
subsequences in uniformly random permutations and riffle shuffles.

\section{Riffle shuffles and connection to random
words}\label{riffle}

The method most often used to shuffle a deck of cards is the
following: first,  cut the deck into two piles and then riffle the
piles together, that is, drop the cards from the bottom of each pile
to form a new pile. The first mathematical models for riffle
shuffles were introduced \cite{gilbert} and \cite{reeds}. These were
further developed  in \cite{diac} and \cite{fulman}. Now following
\cite{fulman}, we will give two equivalent descriptions of riffle
shuffles in the most general sense. For other alternative
descriptions (which will not be used in this paper), see
\cite{diac2} and \cite{fulman}.

\vspace{0.1in}

\noindent \textbf{ Description 1 : } Cut the $n$ card deck into $a$
piles by picking pile sizes
  according to the $mult(a;\textbf{p})$ distribution, where $\textbf{p}=(p_1,...,p_a)$.
That is, choose $b_1,...,b_a$ with probability
$$\binom{n}{b_1,...,b_a} \Pi_{i=1}^a p_i^{b_i} .$$ Then choose
uniformly one of the $\binom{n}{b_1,...,b_a}$ ways of interleaving
the packets, leaving the cards in each pile in their original order.
\vspace{0.1in}
\begin{definition}
The probability distribution on $S_n$ resulting from Description 1
will be called as the \emph{riffle shuffle distribution} and will be
denoted by $P_{n,a,\textbf{p}}.$  When
$\textbf{p}=(1/a,1/a,...,1/a)$, the shuffle is said to be
\emph{unbiased} and the resulting probability measure is denoted by
$P_{n,a}.$ Otherwise, shuffle is said to be \emph{biased.}
\end{definition}

 Note that the usual way of shuffling $n$ cards corresponds to $P_{n,2}$ (assuming
that the shuffler is not cheating). Before moving on to Description
2, let's give an example using unbiased 2-shuffles. The permutation
$$\rho_{n,2}=\left(
                  \begin{array}{ccccccc}
                    1 & 2 & 3 & 4 & 5 & 6 & 7 \\
                    1 & 2 & 5 & 3 & 6 & 7 & 4 \\
                  \end{array}
                \right)$$
is a possible outcome of the $P_{n,2}$ distribution. Here the first
four cards form the first pile, the last three form the second one
and these two piles are riffled together.

The following alternative description will be  important in the
following discussion.

\vspace{0.1in}

\noindent \textbf{Description 2 :} (Inverse $a$-shuffles) The
inverse of a biased a-shuffle has the following description. Assign
independent random digits from
  $\{1,...,a\}$ to each card with distribution $\textbf{p}=(p_1,...,p_a)$. Then sort according to digit,
  preserving relative order for cards with the same digit.

\vspace{0.1in}

In other words, if $\sigma$ is generated according to Description 2,
then $\sigma^{-1} \sim P_{n,a,\textbf{p}}.$ A proof of the
equivalence of these two descriptions (with two other formulations)
for unbiased shuffles can be found in \cite{diac}. Extension to
biased case is straightforward. Now let's  give an example of
generating a random permutation with distribution $P_{n,2}$ using
inverse shuffles.

Consider a deck of 7 cards. We wish to shuffle this deck with the
unbiased 2-shuffle distribution using inverse shuffles. Let
$\mathbb{X}=(X_1,...,X_n)=(1,1,2,1,2,2,1)$ be a sample from
$U(\{1,2\}^7)$. Then, sorting according to digits   preserving
relative order for cards with the same digit gives the new
configuration of cards as $(1,2,4,7,3,5,6)$. In the usual
permutation notation, the resulting permutation after the inverse
shuffle is
$$\sigma= \left(
                                                \begin{array}{ccccccc}
                                                  1 & 2 & 3 & 4 & 5 & 6 & 7 \\
                                                  1 & 2 & 4 & 7 & 3 & 5 & 6 \\
                                                \end{array}
                                              \right),
$$
and the resulting sample from $P_{n,2}$ is
$$\rho_{n,2}:=\sigma^{-1}= \left(
                  \begin{array}{ccccccc}
                    1 & 2 & 3 & 4 & 5 & 6 & 7 \\
                    1 & 2 & 5 & 3 & 6 & 7 & 4 \\
                  \end{array}
                \right).
$$

In the following, we will sometimes call the random vector
$\mathbb{X}=(X_1,...,X_n)$ where $X_i$'s are independent with
distribution $\textbf{p}=(p_1,...,p_a)$  as a \emph{random word}.

Next we formalize the relation between riffle shuffles and random
words. Let  $\rho_{n,a,\textbf{p}}$ be a random permutation with
distribution $P_{n,a,\textbf{p}}$ that is generated using inverse
shuffles with the random word $\mathbb{X}=(X_1,...,X_n)$ and
 observe that
$$\rho_{n,a,\textbf{p}}(i)= \#\{j:X_j < X_i\}+\#\{j\leq i : X_j = X_i\}.$$

Thus for $i,k \in [n]$, we have
$\rho_{n,a,\textbf{p}}(i)>\rho_{n,a,\textbf{p}}(k)$ if and only if
$$\#\{j:X_j < X_i\}+\#\{j : j\leq i , X_j = X_i\}>\#\{j:X_j <
X_{k}\}+\#\{j : j\leq k, X_j = X_{k}\}.$$ Using this, for the case
$i<k$, we immediately arrive at the following important lemma.

\begin{lemma}\label{import} Let $\mathbb{X}=(X_1,...,X_n)$ where $X_i$'s are independent with distribution $\textbf{p}=(p_1,...,p_a)$.
Also let $\rho_{n,a,\textbf{p}}$ be the corresponding permutation as
described above so that $\rho_{n,a,\textbf{p}}$ has distribution
$P_{n,a,\textbf{p}}.$ Then for $i<k$, $\rho_{n,a,\textbf{p}}(i)>
\rho_{n,a,\textbf{p}}(k)$ if and only if $X_i > X_{k}.$
\end{lemma}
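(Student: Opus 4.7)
The plan is to work directly from the formula
\[
\rho_{n,a,\textbf{p}}(i) = \#\{j : X_j < X_i\} + \#\{j \leq i : X_j = X_i\},
\]
which is already established just above the statement of the lemma. Since the two counts on the right-hand side depend only on the value $X_i$ (and the index $i$ through the tie-breaking term), the comparison of $\rho(i)$ and $\rho(k)$ should be governed by comparing $X_i$ to $X_k$. I would therefore proceed by cases on the trichotomy $X_i > X_k$, $X_i = X_k$, or $X_i < X_k$.

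The cleanest case is $X_i = X_k$. Here $\#\{j : X_j < X_i\} = \#\{j : X_j < X_k\}$, and since $i < k$ and $X_k = X_i$, the index $k$ itself contributes to $\#\{j \leq k : X_j = X_k\}$ but not to $\#\{j \leq i : X_j = X_i\}$; more generally every index counted in the first set is also counted in the second. Hence $\rho(i) < \rho(k)$, which is consistent with the statement since $X_i > X_k$ fails. Next, suppose $X_i > X_k$. Then every $j$ with $X_j \leq X_k$ also satisfies $X_j < X_i$, so
\[
\#\{j : X_j < X_i\} \geq \#\{j : X_j < X_k\} + \#\{j : X_j = X_k\} \geq \#\{j : X_j < X_k\} + \#\{j \leq k : X_j = X_k\}.
\]
Since $\#\{j \leq i : X_j = X_i\} \geq 1$ (the index $i$ itself is counted), adding this to the left and dropping it on the right gives the strict inequality $\rho(i) > \rho(k)$. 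The case $X_i < X_k$ is handled by the same argument with the roles reversed, yielding $\rho(i) < \rho(k)$, which again is consistent with the claim.

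Putting the three cases together shows that $\rho(i) > \rho(k)$ occurs exactly in the case $X_i > X_k$, proving the lemma. The only place any care is needed is in the equality case, where one must use the hypothesis $i < k$ to break the tie; every other step is a direct counting inequality. No estimate or limiting argument is required, so I expect no real obstacle beyond bookkeeping the three cases cleanly.
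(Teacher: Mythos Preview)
Your proof is correct and follows essentially the same approach as the paper: both start from the formula $\rho(i) = \#\{j : X_j < X_i\} + \#\{j \leq i : X_j = X_i\}$ and compare the two sides directly. The paper simply states that the lemma follows ``immediately'' from this formula, whereas you have written out the trichotomy $X_i > X_k$, $X_i = X_k$, $X_i < X_k$ in full; your argument is exactly the computation the paper leaves to the reader.
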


This has the following corollary:

\begin{corollary}\label{inverse}
Consider the setting in Lemma \ref{import} and let $S \subset
\{(i,j) \in [n]\times [n]: i < j \}.$ Then $$\sum_{(i,j) \in S}
\mathbbm{1}(\rho_{n,a,\textbf{p}}(i)> \rho_{n,a,\textbf{p}}(j)) =
\sum_{(i,j) \in S} \mathbbm{1} (X_i > X_j).$$
\end{corollary}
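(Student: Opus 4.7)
The plan is to observe that the corollary is an almost immediate consequence of Lemma \ref{import}; the content of the lemma is precisely the pointwise identity between the two indicator functions, and the corollary simply sums these indicators over the prescribed set $S$.

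More concretely, I would argue as follows. Fix $\omega$ in the underlying probability space. For each pair $(i,j) \in S$ we have $i < j$ by assumption on $S$, so Lemma \ref{import} (applied with the roles $i,k$ played by $i,j$) gives the equivalence
\[
\rho_{n,a,\textbf{p}}(i) > \rho_{n,a,\textbf{p}}(j) \quad \Longleftrightarrow \quad X_i > X_j.
\]
Since an indicator is determined by its event, this equivalence yields the pointwise equality $\mathbbm{1}(\rho_{n,a,\textbf{p}}(i) > \rho_{n,a,\textbf{p}}(j)) = \mathbbm{1}(X_i > X_j)$. Summing over $(i,j) \in S$ gives the claimed identity, and since the equality holds $\omega$-by-$\omega$ it holds as an identity between random variables (not merely in distribution).

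There is essentially no obstacle: the hard work was already done in deriving the formula $\rho_{n,a,\textbf{p}}(i) = \#\{j : X_j < X_i\} + \#\{j \leq i : X_j = X_i\}$ and extracting Lemma \ref{import} from it. The only point worth emphasizing in the write-up is that the restriction $i < j$ in the definition of $S$ is exactly what allows the application of Lemma \ref{import}; without it one would have to account for the asymmetric tie-breaking term $\#\{j \leq i : X_j = X_i\}$, which could reverse the inequality when $X_i = X_k$.
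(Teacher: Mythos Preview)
Your argument is correct and is exactly the (implicit) approach of the paper, which states the corollary without proof as an immediate consequence of Lemma~\ref{import}. The only thing you might trim is the final paragraph of commentary, since the body of the proof already makes clear where the hypothesis $i<j$ is used.
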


In the following two sections,  we will make use of this connection
to study various statistics of riffle shuffles. Before that, we
demonstrate the use of random words approach with two other
examples. The first one will be relating riffle shuffles to
uniformly random permutations and the second one will give a
different interpretation of a variation of top to random shuffles.
As a general remark, we note that the results in this paper will be
mostly given for unbiased shuffles to keep the notations simple.
However, all the results in this paper are extendible to the biased
case in a straightforward way.

We start with a total variation result relating riffle shuffle
statistics and uniform permutation statistics. Although the result
is given for $des$ and $inv$, it is much more general as can be seen
from the proof easily.

\begin{theorem}\label{compareunif}
Let $\rho_{n,a}$ and $\pi$ be random permutations in $S_n$ with
unbiased $a-$shuffle distribution and uniform distribution,
respectively. If $f=des$ or $f=inv$, then for any $a \geq n$,
$$d_{TV} (f(\rho_{n,a}), f(\pi)) \leq 1- \frac{a!}{(a-n)!}
\frac{1}{a^n}.$$ In particular, $d_{TV}(f(\rho_{n,a}), f(\pi))
\rightarrow 0$ as $a \rightarrow \infty$.
\end{theorem}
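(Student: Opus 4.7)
The plan is a straightforward coupling argument built on the random-word realization from Description 2. Generate $\rho_{n,a}$ by first sampling $\mathbb{X}=(X_1,\ldots,X_n)$ i.i.d.\ uniform on $[a]$ and then using the formula $\rho_{n,a}(i)=\#\{j:X_j<X_i\}+\#\{j\leq i:X_j=X_i\}$. Let $A$ denote the event that all coordinates of $\mathbb{X}$ are distinct. A direct count gives
\[
P(A)=\frac{a(a-1)\cdots(a-n+1)}{a^n}=\frac{a!}{(a-n)!\,a^n},
\]
which requires $a\geq n$ to be positive, explaining the hypothesis.

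The key observation is that on $A$ the second term in the formula above is always $1$, so $\rho_{n,a}(i)$ is simply the rank of $X_i$ among $X_1,\ldots,X_n$. Because the conditional distribution of $\mathbb{X}$ given $A$ is exchangeable on ordered $n$-tuples of distinct elements, the rank vector is uniformly distributed on $S_n$ conditional on $A$. Equivalently, $P(\rho_{n,a}=\tau,A)=\binom{a}{n}/a^n$ for each $\tau\in S_n$.

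With this in hand, I would define a coupled uniform permutation $\pi$ by setting $\pi=\rho_{n,a}$ on $A$ and letting $\pi$ be an independent uniform draw from $S_n$ on $A^c$. A one-line computation checks $\pi$ is uniform: for any $\tau\in S_n$,
\[
P(\pi=\tau)=\frac{\binom{a}{n}}{a^n}+\Bigl(1-\frac{n!\binom{a}{n}}{a^n}\Bigr)\cdot\frac{1}{n!}=\frac{1}{n!}.
\]
On the event $A$ we have $\rho_{n,a}=\pi$ exactly, hence $f(\rho_{n,a})=f(\pi)$ for \emph{any} function $f$ on $S_n$ (which is the sense in which the result is more general than $\mathrm{des}$ and $\mathrm{inv}$). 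The standard coupling inequality for total variation yields
\[
d_{TV}(f(\rho_{n,a}),f(\pi))\leq P(f(\rho_{n,a})\neq f(\pi))\leq P(A^c)=1-\frac{a!}{(a-n)!\,a^n},
\]
and the limit statement follows because $\frac{a!}{(a-n)!\,a^n}=\prod_{k=0}^{n-1}(1-k/a)\to 1$ as $a\to\infty$ for fixed $n$.

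There is no real obstacle here; the only point that needs a moment's care is verifying that the proposed $\pi$ is genuinely uniform (not merely uniform conditional on $A$), which is why I separate the construction of the coupling from the a posteriori check of its marginal. Note also that Corollary \ref{inverse} is not strictly needed for this argument, only the inverse-shuffle description of $\rho_{n,a}$.
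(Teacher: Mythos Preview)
Your proof is correct and rests on exactly the same key observation as the paper's: conditional on the event that the coordinates of $\mathbb{X}$ are all distinct, the resulting permutation $\rho_{n,a}$ is uniform on $S_n$. The only cosmetic difference is that the paper bounds $|\mathbb{P}(f(\rho_{n,a})\in A)-\mathbb{P}(f(\pi)\in A)|$ directly by decomposing on $\{T=n\}$ versus $\{T<n\}$, whereas you package the same idea as an explicit coupling and invoke the coupling inequality; both routes arrive at $\mathbb{P}(T<n)=1-\frac{a!}{(a-n)!\,a^n}$ in the same way.
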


\begin{proof}
Let $\pi$ be a uniformly random permutation and $\rho_{n,a}$ be a
random permutation with distribution $P_{n,a}$ that is generated
using inverse shuffling with the random vector
$\mathbb{X}=(X_1,...,X_n)$. Also let $T$ be the number of different
digits in the vector $\mathbb{X}.$ Then we have
\begin{eqnarray}\label{unifriff}
  \mathbb{P}(f(\rho_{n,a})\in A ) &=& \mathbb{P}(f(\rho_{n,a})\in A , T=n) +  \mathbb{P}(f(\rho_{n,a})\in A, T<n ) \nonumber\\
    &=& \mathbb{P}(f(\rho_{n,a})\in A|T=n) \mathbb{P}(T=n) +\mathbb{P}(f(\rho_{n,a})\in
    A, T<n) \nonumber\\
    &\leq& \mathbb{P}(f(\pi)\in A) +\mathbb{P}(f(\rho_{n,a})\in A,
    T<n)
\end{eqnarray}
where (\ref{unifriff}) follows by observing
$\mathbb{P}(f(\rho_{n,a})\in A|T=n)=\mathbb{P}(\pi \in A)$ since
$\rho_{n,a}$ has uniform distribution conditional on $T=n$. This
yields
\begin{eqnarray}\label{totvar1}
  \mathbb{P}(f(\rho_{n,a}) \in A) -    \mathbb{P}(f(\pi) \in A) &\leq&   \mathbb{P}(f(\rho_{n,a}) \in A,
  T<n) \leq \mathbb{P}(T<n).
\end{eqnarray}
Similarly, we have
\begin{eqnarray*}
  \mathbb{P}(f(\pi) \in A) &=& \mathbb{P}(f(\pi) \in A)\mathbb{P}(T=n) + \mathbb{P}(f(\pi) \in A)\mathbb{P}(T<n) \\
    &\leq& \mathbb{P}(\rho_{n,a} \in A) + \mathbb{P}(T<n)
\end{eqnarray*}
implying
\begin{equation}\label{totvar2}
\mathbb{P}(f(\pi) \in A) -\mathbb{P}(f(\rho_{n,a}) \in A) \leq
\mathbb{P}(T>n).
\end{equation}
Hence combining (\ref{totvar1}) and (\ref{totvar2}), for $a \geq n$,
we get
\begin{eqnarray*}
  d_{TV}(f(\rho_{n,a}), f(\pi))\leq \mathbb{P}(T<n) =
 \mathbb{P} \left( \bigcup_{i \neq j} \{X_i = X_j\} \right)
   &=& 1 - \mathbb{P}
\left(\bigcap_{i \neq j} \{X_i \neq X_j\} \right) \\
&=& 1 - \frac{\binom{a}{n} n!}{a^n}  \\
&=& 1- \frac{a!}{(a-n)!} \frac{1}{a^n}
\end{eqnarray*}
proving the first claim. The second assertion is immediate from the
bound we obtained.
\end{proof}

\begin{remark}
As can be seen easily from the proof, the result is actually true
for a large class of functions $f.$
\end{remark}

\begin{remark}\label{convolution}
Note that Theorem \ref{compareunif} is also informative for
understanding multiple 2-shuffles by a nice convolution property of
riffle shuffles given by Fulman \cite{fulman}. Letting
$\textbf{p}=(p_1,...,p_a)$, $\textbf{p}'= (p_1',...,p_b')$ be two
probability measures and defining the product $\otimes$ by
$\textbf{p} \otimes \textbf{p}' = (p_1 p_1',...,p_1 p_b',...,p_a
p_1',...,p_a p_b'),$ Fulman's result gives that the convolution of
$P_{n,a,p}$ and $P_{n,a,p'}$ is $P_{n,ab,p \otimes p'}.$ In
particular, when $a=b=2$ and $p_1=p_2=1/2,$ the case of multiple
2-shuffles is handled.
\end{remark}

Since convergence in total variation implies convergence in
distribution, we also have

\begin{corollary} If the shuffle is unbiased, then $f(\rho_{n,a})
\longrightarrow_d  f(\pi)$ as $a \rightarrow \infty.$
\end{corollary}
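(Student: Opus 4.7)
The plan is to derive this corollary as an immediate consequence of Theorem \ref{compareunif}, together with the standard fact that convergence in total variation distance implies convergence in distribution. So there is essentially nothing new to prove beyond unpacking the bound already obtained.

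First I would fix $n$ and let $a \to \infty$. Theorem \ref{compareunif} gives, for $a \geq n$,
$$d_{TV}(f(\rho_{n,a}), f(\pi)) \leq 1 - \frac{a!}{(a-n)!}\frac{1}{a^n} = 1 - \prod_{i=0}^{n-1} \frac{a-i}{a}.$$
For each fixed $i \in \{0, 1, \ldots, n-1\}$, the factor $(a-i)/a \to 1$ as $a \to \infty$, and since $n$ is fixed this is a finite product of quantities tending to $1$, so the whole product tends to $1$ and hence the bound tends to $0$.

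Thus $d_{TV}(f(\rho_{n,a}), f(\pi)) \to 0$ as $a \to \infty$. Since total variation distance dominates the supremum over Borel sets of the difference of probabilities, and in particular dominates the Kolmogorov distance for real-valued random variables, convergence in total variation implies weak convergence; therefore $f(\rho_{n,a}) \to_d f(\pi)$ as $a \to \infty$, as required.

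I do not anticipate any genuine obstacle here: the work has already been done in Theorem \ref{compareunif}, and the corollary is simply recording the asymptotic consequence of that explicit bound. The only thing worth being careful about is making clear that $n$ stays fixed while $a \to \infty$, since otherwise the product $\prod_{i=0}^{n-1}(a-i)/a$ need not tend to $1$ (for example, if $n$ grows with $a$, the birthday-problem behaviour would force the bound away from $0$).
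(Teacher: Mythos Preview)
Your proposal is correct and matches the paper's approach exactly: the corollary is stated immediately after Theorem \ref{compareunif} with the single remark that convergence in total variation implies convergence in distribution, and the fact that the bound tends to $0$ is already the second assertion of that theorem. Your write-up simply spells out these two steps in slightly more detail.
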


We close this section by describing how one can use above ideas to
study a variation of top $m$ to random shuffles which was first
introduced in \cite{diac3}. Consider a deck of $n$ cards and let $0
\leq m \leq n$ be fixed. Now cut off the top $m$ cards and insert
them randomly among the remaining $n-m$ cards, keeping both packets
in the same relative order. We will call this shuffling method as
\emph{ordered top $m$ to random shuffles}.

An ordered top $m$ to random shuffle is actually equivalent to a
2-shuffle in which exactly $m$ cards are cut off (whereas for the
2-shuffles case, $m$ is a binomial random variable). It is not hard
to see that the following result gives an inverse description of
ordered top $m$ to random shuffles.

\begin{theorem}\label{inversefortop}
The inverse of an ordered top $m$ to random shuffle has the
following description. Assign card $i\in [n]$ a random bit $X_i$
where the random vector $\mathbb{X}=(X_1,...,X_n)$ is uniformly
distributed over $\{0,1\}^n$ with the restriction that $\sum_{i=1}^n
X_i = n-m.$ Then sort according to digit, preserving relative order
for cards with the same digit.
\end{theorem}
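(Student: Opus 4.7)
My plan is to prove the theorem by exhibiting a direct bijection between the two sides. Let $\mathcal{V}\subset S_n$ denote the set of valid ordered top $m$ to random shuffle outcomes, that is, permutations $\tau$ (in the convention that $\tau(i)$ is the card at position $i$) satisfying $\tau^{-1}(1)<\cdots<\tau^{-1}(m)$ and $\tau^{-1}(m+1)<\cdots<\tau^{-1}(n)$. By definition the ordered top $m$ to random shuffle is uniform on $\mathcal{V}$, and $|\mathcal{V}|=\binom{n}{m}$. Since the proposed law on $\mathbb{X}$ is uniform on $\{X\in\{0,1\}^n:\sum_i X_i=n-m\}$, which also has $\binom{n}{m}$ elements, it will suffice to show that the map $\mathbb{X}\mapsto\sigma^{-1}$ (with $\sigma$ the stable-sort permutation produced in the theorem) is a bijection onto $\mathcal{V}$.

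Given $\mathbb{X}$ with zero-positions $z_1<\cdots<z_m$ and one-positions $o_1<\cdots<o_{n-m}$, the stable sort places the bit-$0$ cards first, yielding $\sigma$ with one-line form $(z_1,\ldots,z_m,o_1,\ldots,o_{n-m})$. The key observation is that the position of card $c$ in the deck $\sigma^{-1}$ equals $\sigma(c)$, so in $\sigma^{-1}$ the positions of cards $1,\ldots,m$ are $\sigma(1)<\cdots<\sigma(m)$ and the positions of cards $m+1,\ldots,n$ are $\sigma(m+1)<\cdots<\sigma(n)$. Both sequences are strictly increasing by construction of $\sigma$, which shows $\sigma^{-1}\in\mathcal{V}$.

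For injectivity of $\mathbb{X}\mapsto\sigma^{-1}$, I would note that the cards occupying positions $1,\ldots,m$ of $\sigma$ are exactly those with bit $0$, so $\mathbb{X}$ is recoverable from $\sigma$ and hence from $\sigma^{-1}$. Combined with the cardinality match $|\{X:\sum X_i=n-m\}|=|\mathcal{V}|=\binom{n}{m}$, the map is a bijection, and the uniform law on bit vectors pushes forward to the uniform law on $\mathcal{V}$, completing the proof. I expect the main bookkeeping obstacle to be carefully distinguishing the two standard conventions for viewing a permutation — as the deck sequence $(\tau(1),\ldots,\tau(n))$ versus as the card-to-position map $c\mapsto\tau^{-1}(c)$ — since the statement implicitly switches between them through inversion; once that is fixed, the argument is purely combinatorial.
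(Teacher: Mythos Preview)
Your argument is correct. The bijection you set up is exactly the right one, and your bookkeeping with the two conventions (deck sequence versus card-to-position map) is handled cleanly: once you note that card $c$ sits at position $\sigma(c)$ in the deck $\sigma^{-1}$, the membership $\sigma^{-1}\in\mathcal{V}$ and the recoverability of $\mathbb{X}$ from $\sigma$ are immediate, and the equal cardinalities finish it.

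The paper does not actually write out a proof of this statement; it only remarks that an ordered top $m$ to random shuffle is a $2$-shuffle in which the cut size is forced to equal $m$ rather than being binomial, and then says the result ``is not hard to see.'' The intended one-line argument is therefore a \emph{conditioning} argument: start from the already-established equivalence of Description~1 and Description~2 for $a=2$, observe that the first-pile size in the forward shuffle equals the number of low digits in the inverse description, and condition both sides on that number being $m$; since i.i.d.\ uniform bits conditioned on their sum are uniform on the level set, the claimed law on $\mathbb{X}$ drops out. Your route is different in that it is a direct, self-contained bijection that does not appeal to the Description~1/Description~2 equivalence at all. The paper's approach is shorter given what has already been proved, while yours has the advantage of being independent of that earlier machinery and of making the correspondence between bit vectors and shuffle outcomes completely explicit.
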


Now letting $\tau$ be a random permutation in $S_n$ with ordered top
$m$ to random shuffle distribution, Theorem \ref{inversefortop}
allows us to rewrite $des(\tau)$ or $inv(\tau)$ in a useful way
exactly as we did in Corollary \ref{inverse}. Namely,  we have
$$des(\tau) =_d \sum_{i=1}^{n-1} \mathbbm{1}(X_i> X_{i+1}) \qquad \text{and} \qquad inv(\tau)=_d \sum_{i<j}
\mathbbm{1}(X_i>X_j)$$ where $\mathbb{X}=(X_1,...,X_n)$ is uniformly
distributed over $\{0,1\}^n$ with the restriction that $\sum_{i=1}^n
X_i = n-m.$ Hence the problem is transformed into a problem of
uniform permutations of a fixed multiset which is well studied in
the literature. See, for example, \cite{Vis}. We will revisit this
at the end of Section \ref{descinv}.

\section{Convergence rates for the number of descents and
inversions}\label{descinv}

In this section we will discuss the asymptotic normality of the
number of descents and inversions in  riffle shuffles and will
provide convergence rates of order $1/\sqrt{n}$ in the Kolmogorov
distance. Recall that the Kolmogorov distance between two
probability measures $\mu$ and $\nu $ on $\mathbb{R}$ is defined to
be
$$d_K(\mu, \nu) = \sup_{z \in \mathbb{R}}|\mu((-\infty,z]) -
\nu((-\infty,z]) |.$$ We start with the asymptotic normality of the
number of inversions after an $a$ shuffle which was conjectured by
Fulman in \cite{fulman} for unbiased 2-shuffles. Our strategy will
be using Corollary \ref{inverse} to transform the problem  into
random words language, use Janson's U-statistic construction
\cite{Janson} for the random words case  and finally use Chen and
Shao's results on asymptotics of U-statistics \cite{chen}. Before
moving on to the main result, we provide some pointers to the
literature and give the necessary background on U-statistics.

First we note that the asymptotic normality of the number of
inversions in random words is recently proven by Bliem and Kousidis
\cite{kousidis} without convergence rates in a more general
framework.  In \cite{Janson}, Janson gave equivalent descriptions of
the random words problem and analyzed the asymptotic behavior using
U-statistics theory. Naturally, the convergence rate result given
here will also apply to Janson's case.

Now recall that for a real valued symmetric function $h :
\mathbb{R}^m \rightarrow \mathbb{R}$ and for a random sample
$X_1,...,X_n$ with $n \geq m$, \emph{a U-statistic with kernel} $h$
is defined as
$$U_n=U_n(h) = \frac{1}{\binom{n}{m}} \sum_{C_{m,n}}
h(X_{i_1},...,X_{i_m})$$ where the summation is over the set
$C_{m,n}$ of all $\binom{n}{m}$ combinations of $m$ integers, $i_1 <
i_2<...<i_m$ chosen from $\{1,...,n\}.$ The next result of Chen and
Shao will be useful for obtaining convergence rates in the
Kolmogorov distance. We note that throughout this paper, $Z$ will
denote a standard normal random variable. Also in the following
statement $h_1(X_1):=\mathbb{E}[h(X_1,...,X_m)|X_1]$.
\begin{theorem}\label{conv} \cite{chen}
Let $X_1,...,X_n$ be i.i.d. random variables, $U_n$ be a U-statistic
with symmetric kernel $h$, $\mathbb{E}[h(X_1,...,X_m)]=0,
\sigma^2=Var(h(X_1,...,X_m))<\infty$ and
$\sigma_1^2=Var(h_1(X_1))>0.$  If in addition
$\mathbb{E}|h_1(X_1)|^3< \infty,$ then $$d_K \left(\frac{\sqrt{n}}{m
\sigma_1}U_n,Z \right)  \leq \frac{6.1
\mathbb{E}|h_1(X_1)|^3}{\sqrt{n} \sigma_1^3}
+\frac{(1+\sqrt{2})(m-1)\sigma}{(m(n-m+1))^{1/2} \sigma_1}.
$$
\end{theorem}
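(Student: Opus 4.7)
The plan is to decompose the U-statistic into its Hájek projection (a sum of i.i.d.\ random variables) plus a lower-order remainder, then apply the classical Berry--Esseen theorem to the projection and control the remainder by a smoothing argument. Concretely, set $g(x)=h_1(x)-\mathbb{E}[h(X_1,\ldots,X_m)]=h_1(x)$ (since the kernel is centered) and define the Hájek projection
$$\hat{U}_n=\frac{m}{n}\sum_{i=1}^n g(X_i),\qquad R_n=U_n-\hat{U}_n.$$
By Hoeffding's orthogonal decomposition of $U_n$, the random variables $\hat{U}_n$ and $R_n$ are uncorrelated, $\mathrm{Var}(\hat{U}_n)=m^2\sigma_1^2/n$, and one can show from the standard bound on higher-order Hoeffding components that $\mathrm{Var}(R_n)\le C(m-1)^2\sigma^2/(n(n-m+1))$ for an explicit constant $C$.

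Write $W=\sqrt{n}U_n/(m\sigma_1)$, $W_1=\sqrt{n}\hat{U}_n/(m\sigma_1)$, and $W_2=W-W_1=\sqrt{n}R_n/(m\sigma_1)$. Then $W_1$ is a normalized sum of $n$ i.i.d.\ mean-zero random variables $g(X_i)/(\sqrt{n}\sigma_1)$ with unit variance, so the Berry--Esseen theorem yields
$$\sup_{z}\bigl|\mathbb{P}(W_1\le z)-\Phi(z)\bigr|\le \frac{C_{\mathrm{BE}}\,\mathbb{E}|h_1(X_1)|^3}{\sqrt{n}\,\sigma_1^3},$$
with $C_{\mathrm{BE}}\le 6.1$ in the version we want. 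This is the first term in the claimed inequality.

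For the remainder, use the two-sided smoothing argument: for any $\varepsilon>0$,
\begin{align*}
\mathbb{P}(W\le z)-\Phi(z) &\le \mathbb{P}(W_1\le z+\varepsilon)-\Phi(z+\varepsilon)+\Phi(z+\varepsilon)-\Phi(z)+\mathbb{P}(|W_2|>\varepsilon),\\
\Phi(z)-\mathbb{P}(W\le z) &\le \Phi(z)-\Phi(z-\varepsilon)+\Phi(z-\varepsilon)-\mathbb{P}(W_1\le z-\varepsilon)+\mathbb{P}(|W_2|>\varepsilon).
\end{align*}
Combine these with the Berry--Esseen bound, the trivial estimate $|\Phi(z\pm\varepsilon)-\Phi(z)|\le \varepsilon/\sqrt{2\pi}$, and Chebyshev's inequality $\mathbb{P}(|W_2|>\varepsilon)\le \mathrm{Var}(W_2)/\varepsilon^2$. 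Optimizing over $\varepsilon$ (balancing $\varepsilon$ against $\mathrm{Var}(W_2)/\varepsilon^2$) produces a term of the form $\mathrm{const}\cdot(m-1)\sigma/[\sigma_1\sqrt{m(n-m+1)}]$, which is exactly the second summand in the theorem.

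The main obstacle is obtaining the correct numerical constant $1+\sqrt{2}$ in the second term. A crude application of Chebyshev followed by optimization gives the right order but with a larger constant. The sharp constant $1+\sqrt{2}$ arises from using a concentration inequality for the remainder adapted to the Stein-method framework (as in Chen--Shao), which exploits the exchangeability structure of $R_n$ more efficiently than Chebyshev. Verifying that this refined concentration bound delivers exactly $1+\sqrt{2}$ after optimization is the delicate constant-chasing step; all other parts of the argument are routine applications of Hoeffding's decomposition and classical Berry--Esseen.
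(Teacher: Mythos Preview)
The paper does not prove this theorem; it is quoted from Chen and Shao \cite{chen} as a black-box tool and then applied in the proof of Theorem~\ref{normalinversion}. There is therefore no ``paper's own proof'' to compare your attempt against.

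Regarding your sketch itself: the Hoeffding decomposition into H\'ajek projection plus remainder, followed by Berry--Esseen on the linear part and a smoothing/Chebyshev bound on the remainder, is the classical route and does yield the correct $O(n^{-1/2})$ rate. However, as you yourself flag, it will not deliver the stated constants. In the actual Chen--Shao argument the statistic is written in the general form $W=\sum_i \xi_i+\Delta$ and handled via Stein's method combined with a randomized concentration inequality (their Theorem~2.2); the constant $1+\sqrt{2}$ comes out of the interaction between the concentration bound and the Stein equation, and $6.1$ is not the i.i.d.\ Berry--Esseen constant but arises from their general nonlinear-statistic framework. So your plan reproduces the structure and the rate, but the final ``delicate constant-chasing step'' you allude to is not a refinement of Chebyshev-plus-optimization---it requires a genuinely different toolkit. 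If your goal is only the $O(n^{-1/2})$ rate (which is all the present paper actually uses), your outline is adequate; if the goal is the theorem exactly as stated, the gap is real and cannot be closed within the approach you describe.
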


Now we are ready to state and prove our main result on the number of
inversions in riffle shuffles.

\begin{theorem}\label{normalinversion}
Let $\rho_{n,a}$ be a random permutation with distribution $P_{n,a}$
with $a\geq 2.$ Then
$$d_K\left(\frac{inv(\rho_{n,a})-\frac{n(n-1)}{4}
\frac{a-1}{a}}{\sqrt{n}(n-1) \sqrt{\frac{a^2-1}{36a^2}}}, Z\right)
\leq \frac{C}{\sqrt{n}}$$ where $C$ is a constant independent of
$n$.
\end{theorem}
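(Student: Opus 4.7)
The plan is to transfer the problem from riffle shuffles to random words via Corollary~\ref{inverse}, rewrite the resulting statistic using Janson's U-statistic construction, and apply the Chen-Shao quantitative CLT of Theorem~\ref{conv}.

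Applying Corollary~\ref{inverse} with $S = \{(i,j):1\le i<j\le n\}$ yields
\[
inv(\rho_{n,a}) \;=_d\; W \;:=\; \sum_{i<j}\mathbbm{1}(X_i>X_j),
\]
where $X_1,\dots,X_n$ are i.i.d.\ uniform on $[a]$, and a direct moment computation gives $\mathbb{E}[W] = \binom{n}{2}(a-1)/(2a) = n(n-1)(a-1)/(4a)$, which matches the centering in the theorem. Since $W$ is not symmetric in its arguments, we split $W = \tfrac12(W+W') + \tfrac12(W-W')$ with $W' := \sum_{i<j}\mathbbm{1}(X_j>X_i)$. The symmetric summand is a U-statistic with kernel $\tfrac12\mathbbm{1}(x\neq y)$ whose first Hoeffding projection is the constant $(a-1)/(2a)$, so it is degenerate and of variance $O(n^2)$. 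The antisymmetric summand $\tfrac12(W-W')$ carries the leading variance; its linear Hoeffding projection simplifies (after cancellation via $\sum_i(n-2i+1)=0$) to
\[
L \;=\; \frac{1}{a}\sum_{i=1}^n (n-2i+1)\,X_i,
\]
with $\mathrm{Var}(L) = \frac{(a^2-1)}{12 a^2}\cdot\frac{n(n^2-1)}{3} = \frac{n(n^2-1)(a^2-1)}{36 a^2}$, asymptotically equal to the squared normalization $n(n-1)^2(a^2-1)/(36 a^2)$ in the theorem. This identifies $\sigma_1^2 = (a^2-1)/(36 a^2)$ for Chen-Shao.

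Next, Janson's construction casts the non-degenerate piece $\tfrac12(W-W')$ as a symmetric degree-2 U-statistic whose leading Hoeffding term is $L$; boundedness of the resulting kernel gives $\sigma^2 = \mathrm{Var}(h(X_1,X_2)) = O(1)$ and $\mathbb{E}|h_1(X_1)|^3 = O(1)$ uniformly in $n$, so Theorem~\ref{conv} with $m=2$ yields $d_K\!\left(L/(\sqrt{n}(n-1)\sigma_1),Z\right) \le C/\sqrt{n}$. The total Hoeffding residual $W - \mathbb{E}[W] - L$ (from the degenerate symmetric part plus the degree-two residual of $\tfrac12(W-W')$) has variance $O(n^2) = o(\mathrm{Var}(L)) \sim n^3(a^2-1)/(36 a^2)$, and a Markov-plus-smoothing argument transfers the Kolmogorov bound from $L$ to $W$ with the same rate, completing the proof.

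The main obstacle will be the U-statistic reformulation itself: $W$ is not a symmetric function of $(X_1,\dots,X_n)$, and the naive symmetrization $h(x,y) = \mathbbm{1}(x>y)+\mathbbm{1}(y>x)$ collapses to a degenerate kernel. One must therefore isolate the antisymmetric component $\tfrac12(W-W')$ and verify that Janson's corresponding symmetric U-statistic has first-order variance exactly $(a^2-1)/(36 a^2)$. Once this identification is made, the moment hypotheses of Theorem~\ref{conv} follow automatically from boundedness of the kernel, and the advertised rate $C/\sqrt{n}$ emerges directly.
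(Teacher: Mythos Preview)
Your high-level plan---transfer to random words via Corollary~\ref{inverse}, cast the result as a U-statistic \`a la Janson, and apply Chen--Shao's Theorem~\ref{conv}---is exactly the paper's approach, but the execution is tangled and contains a real gap.  In the paper (and in Janson) one introduces auxiliary i.i.d.\ uniforms $U_1,\dots,U_n$ on $(0,1)$, sets $Z_i=(X_i,U_i)$, and observes that
\[
W \;=_d\; \sum_{i<j}\bigl(\mathbbm{1}(X_i>X_j)\mathbbm{1}(U_i<U_j)+\mathbbm{1}(X_i<X_j)\mathbbm{1}(U_i>U_j)\bigr),
\]
which is a genuine symmetric U-statistic in the $Z_i$'s with bounded kernel.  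Theorem~\ref{conv} then applies \emph{directly} and delivers the bound for the normalized U-statistic $\sqrt{n}U_n/(2\sigma_1)=(W-\mathbb{E}W)/\bigl(\sqrt{n}(n-1)\sigma_1\bigr)$ itself, not for its H\'ajek projection $L$.  Your symmetric/antisymmetric split and the linear term $L$ are therefore unnecessary detours; note also that $h_1$ must be a function of $(X_1,U_1)$, not of $X_1$ alone.

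The more serious problem is the last step.  A ``Markov-plus-smoothing'' transfer from $L$ to $W$ through the residual $R=W-\mathbb{E}W-L$ does not recover the rate $C/\sqrt{n}$.  After dividing by $\sqrt{n}(n-1)\sigma_1\asymp n^{3/2}$, the residual has variance $O(n^2)/n^3=O(1/n)$; combining $\mathbb{P}(|R|/n^{3/2}>\delta)\le C/(n\delta^2)$ with the Lipschitz property of $\Phi$ and optimizing over $\delta$ yields only $O(n^{-1/3})$.  The whole point of Theorem~\ref{conv} is that it handles the nonlinear remainder internally via a concentration argument and produces the optimal $n^{-1/2}$ rate.  Once you invoke it correctly on the Janson U-statistic for $W$, the proof is finished with no further transfer step; your computation $\sigma_1^2=(a^2-1)/(36a^2)$ is then exactly the paper's $\mathrm{Var}(k_1(X_1,U_1))$.
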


\begin{proof}

Let $a \geq 2$ and  $\rho_{n,a}$ have distribution $P_{n,a}$. Using
Corollary \ref{inverse}, we have
$$inv(\rho_{n,a}) := \sum_{i<j} \mathbbm{1}(\rho_{n,a}(i)> \rho_{n,a}(j))=_d\sum_{i<j}
\mathbbm{1}(X(i)>X(j))$$ where $X_i'$s are independent and uniformly
distributed over $[a]$. This immediately yields
\begin{eqnarray*}
  \mathbb{E}[inv(\rho_{n,a})] &=& \mathbb{E} \left[ \sum_{i<j} \mathbbm{1}(X_i>X_j)\right]
    = \binom{n}{2} \mathbb{P}(X_1 > X_2) = \frac{n(n-1)}{4} \frac{a-1}{a}.
\end{eqnarray*}

Using similar elementary computations one gets

$$\sigma^2=Var(inv(\rho_{n,a}))=\frac{n(n-1)(2n+5)}{72} \frac{a^2-1}{a^2}.$$ See \cite{kousidis} or \cite{Janson} for details.
Now following \cite{Janson}, we will find a U-statistic
representation of $inv(\rho_{n,a}).$ All details are included for
the sake of completeness.

Let $U_1,...,U_n$ be independent random variables uniformly
distributed over $(0,1).$ Order $U_i$'s as
$U_{\sigma(1)}<U_{\sigma(2)}<...<U_{\sigma(n)}$ where $\sigma \in
S_n$ is properly chosen. Since $\sigma$ has uniform distribution
over $S_n$,  we have

\begin{equation}\label{equallaw}
(X_1,...,X_n)=_d (X_{\sigma(1)},...,X_{\sigma(n)}).
\end{equation}

 Now we get
\begin{eqnarray*}
                            inv(\rho_{n,a}) =_d \sum_{i<j} \mathbbm{1}(X_i>X_j)
                              &=_d& \sum_{i<j}
                              \mathbbm{1}(X_{\sigma(i)}>X_{\sigma(j)})
                              \\
                              &=& \sum_{i,j=1}^n \mathbbm{1}(X_{\sigma(i)}>X_{\sigma(j)}, i
                              <j).
                          \end{eqnarray*}

\noindent where the second equality follows from (\ref{equallaw}).
Observing $i<j$ if and only if $U_{\sigma(i)}< U_{\sigma(j)}$, we
obtain
\begin{eqnarray}\label{inveqn1}
                                                                       inv(\rho_{n,a}) =_d  \sum_{i,j=1}^n \mathbbm{1}(X_{\sigma(i)}>X_{\sigma(j)}, U_{\sigma(i)}<U_{\sigma(j)})
                                                                       = \sum_{i,j=1}^n \mathbbm{1}(X_i>X_j) \mathbbm{1}(U_i < U_j)
                                                                    \end{eqnarray}
Next let $Z_i = (X_i,U_i)$ for $i=1,...,n$ and observe that $Z_i$'s
are i.i.d. random variables. Define the functions $f$ and $g$ by
$$f((x_i,u_i),(x_j,u_j)):= \binom{n}{2}\mathbbm{1}(x_i>x_j) \mathbbm{1} (
u_i<u_j)$$ and $$g((x_i,u_i),(x_j,u_j))=
f((x_i,u_i),(x_j,u_j))+f((x_j,u_j),(x_i,u_i)).$$ Then clearly $g$ is
a real valued symmetric function and

\begin{equation}\label{inveqn2}
    \sum_{k,l=1}^n \mathbbm{1}(X_k>X_l) \mathbbm{1}(U_k < U_l) =
\frac{1}{\binom{n}{2}}\sum_{k < l} g(Z_k,Z_l).
\end{equation}
 Thus, by (\ref{inveqn1}) and (\ref{inveqn2}) we conclude
that $inv(\rho_{n,a})$ is a U-statistic with $$inv(\rho_{n,a})=_d
\binom{n}{2}^{-1} \sum_{i<j} \binom{n}{2} (\mathbbm{1}(X_i
>X_j)\mathbbm{1}(U_i <U_j)+\mathbbm{1}(X_i <X_j)\mathbbm{1}(U_i
>U_j)).$$ So in terms of Theorem \ref{conv}, we have
$h((x_1,u_1),(x_2,u_2))=\binom{n}{2}k((x_1,u_1),(x_2,u_2))$ where
$$k((x_1,u_1),(x_2,u_2))=\mathbbm{1}(x_1 >x_2)\mathbbm{1}(u_1
<u_2)+\mathbbm{1}(x_1 <x_2)\mathbbm{1}(u_1 >u_2)-\frac{a-1}{2a}.$$
Defining
$$k_1(x_1,u_1)=\mathbb{E}[k(X_1,U_1),(X_2,U_2)|X_1=x_1,U_1=u_1],$$
we have $h_1(x_1,u_1)=\binom{n}{2}k_1(x_1,u_1).$ Also

\begin{eqnarray*}
  k_1(X_1,U_1) &=& \mathbb{E}[\mathbbm{1}(X_1 >X_2)\mathbbm{1}(U_1 <U_2)+\mathbbm{1}(X_1 <X_2)\mathbbm{1}(U_1 >U_2)|X_1,U_1]-\frac{a-1}{2a} \\
  &=& \frac{X_1-1}{a}(1-U_1)+\frac{a-X_1}{a}U_1 -\frac{a-1}{2a} \\
    &=&  \frac{1}{a}(X_1-2X_1U_1+(a+1)U_1-1)-\frac{a-1}{2a}.
\end{eqnarray*}

Now doing some elementary computations, we obtain
$$\sigma_1^2=Var(h_1(X_1,U_1))=\binom{n}{2}^2 Var(k_1(X_1,U_1))=\binom{n}{2}^2
\frac{a^2-1}{36a^2}$$ and also $$\mathbb{E}|h_1(X_1,U_1)|^3 \leq 9
\binom{n}{2}^3.$$ Hence using Theorem \ref{conv}, we arrive at
$$d_K\left(\frac{inv(\rho_{n,a})-\frac{n(n-1)}{4}
\frac{a-1}{a}}{\sqrt{n}(n-1) \sqrt{\frac{a^2-1}{36a^2}}}, Z\right)
\leq \frac{(6.1) 9 \binom{n}{2}^3}{\sqrt{n}\binom{n}{2}^3
\left(\frac{a^2-1}{36a^2} \right)^{3/2}}
+\frac{(1+\sqrt{2})\sqrt{\frac{n(n-1)(2n+5)}{72}}
\sqrt{\frac{a^2-1}{a^2}}}{\sqrt{2}\sqrt{n-1}
\binom{n}{2}\sqrt{\frac{a^2-1}{36a^2}}}$$ which in particular
implies the existence of a constant $C$ independent of $n$ as in the
statement of the theorem. This completes the proof.
\end{proof}
\begin{remark}
U-statistics construction given above will still work when the
shuffle is biased.  So under certain conditions on the distribution
vector $\textbf{p}=(p_1,...,p_a)$ (namely, by excluding the case
$p_j=1$ for some $j \in [a]$), one can extend Theorem
\ref{normalinversion} to the case of biased riffle shuffles (or
random words).
\end{remark}

\begin{remark}
By the nice convolution property discussed in Remark
\ref{convolution}, Theorem \ref{normalinversion} also gives
convergence rates for multiple unbiased $2-$shuffles (with explicit
constants, as can be seen easily from the proof).
\end{remark}

Next we move on to the number of  descents in riffle shuffles which
is much easier due to the underlying local dependence. Recall that,
if we define the distance between two subsets of $A$ and $B$ of
$\mathbb{N}$ by $$\rho(A,B):= \inf \{|i-j|: i \in A, j \in B\},$$
the sequence of random variables $Y_1,Y_2,...$ is said to be
\emph{$m-$dependent} if $\{Y_i, i \in A\}$ and $\{Y_j, j \in B\}$
are independent whenever $\rho(A,B) > m$ with $A, B \subset
\mathbb{N}.$ Now we recall the following result from
\cite{chenshaolocal} about $m-$dependent random variables.

\begin{theorem}\label{localdepresult} \cite{chenshaolocal} If $\{Y_i\}_{i \geq 1}$ is  a sequence of zero mean
$m-$dependent random variables, $W=\sum_{i=1}^{n} Y_i$ and
$\mathbb{E}[W^2]=1$, then for all $p \in (2,3]$, $$d_K(W,Z) \leq 75
(10m+1)^{p-1} \sum_{i=1}^{n} \mathbb{E}|Y_i|^p.$$
\end{theorem}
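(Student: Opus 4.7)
The plan is Stein's method combined with the concentration-inequality technique. Fix $z \in \mathbb{R}$ and let $f = f_z$ be the bounded solution to the Stein equation $f'(w) - wf(w) = \mathbbm{1}(w \leq z) - \Phi(z)$, so that $d_K(W,Z) = \sup_z |\mathbb{E}[f'(W) - Wf(W)]|$, with the standard bounds $\|f\|_\infty \leq \sqrt{2\pi}/4$, $\|f'\|_\infty \leq 1$, and $w \mapsto wf(w)$ Lipschitz. I would then exploit the $m$-dependence by setting $A_i = \{j : |j-i| \leq m\}$ and $\eta_i = \sum_{j \in A_i} Y_j$; since $Y_i$ is independent of $W - \eta_i$ and $\mathbb{E}[Y_i] = 0$,
\[ \mathbb{E}[Wf(W)] = \sum_{i=1}^n \mathbb{E}[Y_i(f(W) - f(W - \eta_i))] = \sum_i \mathbb{E}\left[Y_i \int_{-\eta_i}^{0} f'(W + t)\, dt\right]. \]
Together with $\mathbb{E}[W^2] = \sum_i \mathbb{E}[Y_i \eta_i] = 1$ (again by $m$-dependence), the Stein error reduces to controlling
\[ \sum_i \mathbb{E}\left[Y_i \int_{-\eta_i}^{0} \bigl(f'(W+t) - f'(W)\bigr)\, dt\right]. \]

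Using the Stein equation itself, $f'(W+t) - f'(W) = (W+t)f(W+t) - Wf(W) - \mathbbm{1}(z-t < W \leq z)$, so this error splits into a piece where $w f(w)$ is Lipschitz (easy) and a piece involving the probability that $W$ falls in a small interval near $z$. The technical heart of the proof is therefore the concentration inequality
\[ \mathbb{P}(a \leq W \leq b) \leq C_1 (b-a) + C_2 (10m+1)^{p-1} \sum_i \mathbb{E}|Y_i|^p \qquad (p \in (2,3]), \]
which I would establish by applying Stein's method a second time --- now to the indicator of $[a,b]$ --- and iterating with truncation of the $Y_i$ at an appropriate level $\delta$. The factor $(10m+1)^{p-1}$ arises from a doubled-neighborhood decomposition $B_i = \{j : |j-i| \leq 2m\}$, under which the block $\eta_i$ is itself independent of $W - \sum_{j \in B_i} Y_j$; the exponent $p-1$ reflects interpolation between the $L^2$ and $L^p$ moment bounds on the $Y_i$.

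Inserting this concentration estimate into the Stein error and applying H\"older's inequality with exponents $p$ and $p/(p-1)$ yields the stated bound. The main obstacle is proving the concentration inequality with the sharp $(10m+1)^{p-1}$ dependence: a naive estimate only produces a factor polynomial in $n/m$, and only a recursive Stein-method bootstrap of the type carried out in \cite{chenshaolocal} delivers the correct rate. Once that ingredient is in hand, the remaining work is careful constant-tracking to keep the universal numerical factor below $75$.
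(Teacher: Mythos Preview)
The paper does not prove this theorem at all: it is quoted verbatim from Chen and Shao \cite{chenshaolocal} and used as a black box, so there is no ``paper's own proof'' to compare your proposal against. Your sketch is a faithful outline of the Stein's-method-with-concentration argument that Chen and Shao themselves use, so in that sense it is correct, but for the purposes of this paper nothing beyond the citation is required.
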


Now, letting $\rho_{n,a}$ be a sample from $P_{n,a}$, we know from
Corollary \ref{inverse} that
$$des(\rho_{n,a}) = \sum_{i=1}^{n-1} \mathbbm{1}(\rho_{n,a}(i)>\rho_{n,a}(i+1))=_d \sum_{i=1}^{n-1}
\mathbbm{1}(X_i>X_{i+1})$$ where $X_i's $ are independent and
uniform over $[a].$ Setting $V= \sum_{i=1}^{n-1} Y_i$ with $Y_i =
\mathbbm{1}(X_i>X_{i+1})$, we have $\mathbb{E}[V] = (n-1)
\frac{a-1}{2a}.$ Also since $Var(Y_i)= \frac{a^2-1}{4a^2}$ and
$Cov(Y_i,Y_{i+1})=-\left(\frac{a^2-1}{12a^2}\right)$ for
$i=1,...,n-1$, we have

\begin{eqnarray*}
  Var(V)=\sum_{i=1}^{n-1} Var(Y_i) +2 \sum_{i<j}
Cov(Y_i,Y_j) &=& (n-1)\frac{a^2-1}{4a^2}-2(n-1)\left(\frac{a^2-1}{12a^2}\right) \\
    &=& \frac{(a^2-1)(n-1)}{12a^2}.
\end{eqnarray*}
Now noting that $Y_i$'s are 1-dependent and using Theorem
\ref{localdepresult} with $p=3$, we arrive at

\begin{theorem}\label{descents}
Let $\rho_{n,a}$ be distributed according to $P_{n,a}.$ Then
$$d_K\left(\frac{des(\rho_{n,a})-\frac{(a-1)(n-1)}{2a}
}{\sqrt{\frac{(a^2-1)(n-1)}{12a^2}}}, Z\right) \leq
\frac{C}{\sqrt{n}}$$ where $C$ is a constant independent of $n$.
\end{theorem}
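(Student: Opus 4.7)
The plan is to follow the scaffolding that the author has already assembled just before the theorem statement. The first step is to invoke Corollary \ref{inverse} with $S = \{(i, i+1) : 1 \leq i \leq n-1\}$ to obtain the equality in distribution $des(\rho_{n,a}) =_d \sum_{i=1}^{n-1} Y_i$ where $Y_i = \mathbbm{1}(X_i > X_{i+1})$ and $X_1, \ldots, X_n$ are i.i.d.\ uniform on $[a]$. The crucial structural observation is that the sequence $\{Y_i\}$ is $1$-dependent in the sense preceding Theorem \ref{localdepresult}: if $|i - j| \geq 2$, then $Y_i$ and $Y_j$ are measurable with respect to disjoint blocks of the $X_k$'s and are therefore independent, so $\rho(A, B) > 1$ forces independence of $\{Y_i : i \in A\}$ and $\{Y_j : j \in B\}$.

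Second, I would carry out the mean and variance computations. By symmetry $\mathbb{P}(X_1 > X_2) = \frac{a-1}{2a}$, so $\mathbb{E}[V] = (n-1) \cdot \frac{a-1}{2a}$ for $V := \sum_i Y_i$, and each Bernoulli $Y_i$ has $Var(Y_i) = \frac{a^2-1}{4a^2}$. The only nontrivial covariance is $Cov(Y_i, Y_{i+1})$, which a short calculation over $(X_i, X_{i+1}, X_{i+2})$ gives as $-\frac{a^2-1}{12a^2}$. Summing the $n-1$ diagonal and $n-2$ neighboring contributions yields $Var(V) = c_a \cdot n + O(1)$ with $c_a = \frac{a^2-1}{12a^2}$, which matches the normalizing constant in the theorem up to a lower-order term that will be absorbed into $C$.

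Third, I would apply Theorem \ref{localdepresult} to the standardized sum $W := (V - \mathbb{E}V)/\sqrt{Var(V)}$, writing $\tilde Y_i := (Y_i - \mathbb{E}Y_i)/\sqrt{Var(V)}$ so that $W = \sum_{i=1}^{n-1} \tilde Y_i$ with $\mathbb{E}[W^2] = 1$ and the $\tilde Y_i$ remain zero-mean and $1$-dependent. Because $Y_i \in \{0,1\}$, we have $|\tilde Y_i| \leq Var(V)^{-1/2}$ deterministically, so with $m = 1$ and $p = 3$ the cited bound gives
$$d_K(W, Z) \leq 75 \cdot 11^{2} \sum_{i=1}^{n-1} \mathbb{E}|\tilde Y_i|^3 \leq \frac{75 \cdot 121 \cdot (n-1)}{Var(V)^{3/2}}.$$
Since $Var(V)^{3/2}$ is of order $n^{3/2}$, the right-hand side is $O(n^{-1/2})$ with a constant depending on $a$ but not on $n$, which is the desired conclusion.

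I do not anticipate any real obstacle: Corollary \ref{inverse} reduces the shuffle statistic to an exchangeable $1$-dependent sum of bounded indicators, and Theorem \ref{localdepresult} does the analytic heavy lifting. The only delicate bookkeeping is (i) noting that only $n - 2$ neighboring pairs contribute to the covariance sum, (ii) applying the cubic-moment bound after normalization by $\sqrt{Var(V)}$ rather than to $Y_i$ itself, and (iii) folding the $a$-dependent prefactors $c_a^{-3/2}$ and $(a^2-1)/a^2$ into the single constant $C$.
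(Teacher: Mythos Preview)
Your proposal is correct and follows essentially the same route as the paper: reduce $des(\rho_{n,a})$ to the $1$-dependent sum $\sum_{i=1}^{n-1}\mathbbm{1}(X_i>X_{i+1})$ via Corollary~\ref{inverse}, compute the mean and variance, and apply Theorem~\ref{localdepresult} with $m=1$, $p=3$. Your bookkeeping is in fact slightly more careful than the paper's --- you correctly note that only $n-2$ neighboring covariances enter, so that $Var(V)=\frac{a^2-1}{12a^2}(n+1)$ rather than the $\frac{a^2-1}{12a^2}(n-1)$ used in the paper's normalization, and you rightly observe that this $O(1)$ discrepancy is harmlessly absorbed into $C$.
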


\begin{remark}\label{stocdom}
The discussion from Section \ref{riffle}, and a simple coupling
argument gives the following stochastic dominance result, say, for
the number of inversions:
$$Inv(\rho_{n,2}) \leq_s  Inv(\rho_{n,a})\leq_s Inv(\pi) $$
where $a \geq 2$, $\pi$ is a uniformly random permutation and
$\leq_s$ denotes stochastic ordering. Since the means and variances
of these three statistics are of the same order, it wouldn't be
surprising to obtain the asymptotic normality of $Inv(\rho_{n,a})$
by the corresponding results for $Inv(\rho_{n,2})$ and $Inv(\pi).$
We will pursue this idea in a future work.
\end{remark}

We conclude this section with a discussion of the asymptotic
normality of the number of inversions after ordered top $m$ to
random shuffles which were defined at the end of Section
\ref{riffle}. We start by recalling a special case of a result of
Congar and Viswanath \cite{Vis} on multisets. Let $\beta \in
[1/2,1)$. Then there exists a constant $C
> 0 $ depending only on $\beta$ so that whenever $\tau$ is a uniform permutation of the
multiset $\{0^{n_0}, 1^{n_1}\}$ with $n_0, n_1 \in \mathbb{N}$,
$n_0+n_1=n$, $\max\{n_0,n_1\} \leq \beta n$,
$$d_K\left(\frac{des(\tau)-
\mu}{\sigma},Z \right) \leq \frac{C}{\sqrt{n}}$$ is satisfied where
$\mu=\mathbb{E}[des(\tau)]$ and $\sigma^2=Var(des(\tau))$ (For
details, see \cite{Vis}). It is easily seen from this result and
Theorem \ref{inversefortop} that, one can analyze the asymptotic
behavior of the number of inversions in ordered top $m$ to random
shuffles under the assumption that $\max\{m,n-m\} \leq \beta n.$
Note that this also suggests a natural generalization of riffle
shuffles. To see this, consider the case where the number of cards
in the hands are $(n_0,n_1)$ where $(n_0,n_1)$ is uniform over the
set $\{(n_0,n_1) \in [n] \times [n] : n_0+n_1=n, \min
\{n_0,n_1\}\geq \alpha n \}$ for some $1 > \alpha \geq 0.$ When
$\alpha= 0$, we get $P_{n,2}.$ Using $\alpha>0$, we get a different
model which can be meaningful since when one shuffles a deck, there
will be at least a few cards in each hand.

\section{Another related Statistic : Longest alternating subsequences}

In this section we will study the asymptotic behavior of lengths of
longest alternating subsequences (which are closely related to
descents) in uniform permutations and riffle shuffles. Letting
$\textbf{x}:=(x_i)_{i=1}^n$ be a sequence of real numbers, a
subsequence $x_{i_k}$, where $1 \leq i_1 < ...< i_k \leq n $, is
called an \emph{alternating subsequence} if
$x_{i_1}>x_{i_2}<x_{i_3}>...x_{i_k}.$ The \emph{length of the
longest alternating subsequence of } $\textbf{x}$ is defined as
$$LA_n(\textbf{x}):=\max \{k: \textbf{x} \; \text{has an alternating subsequence of
length } k\}.$$ For an example, let $\textbf{x}=(3,1,7,4,2,6,5).$
Then $(3,1,7,2,6,5)$ is an alternating subsequence and it is easy to
see that $LA_7(\textbf{x})=6.$ For an excellent survey on longest
alternating subsequence problem, see \cite{stanley}.  The following
lemma, whose proof can be found in \cite{houdre} and \cite{romik},
is very useful to understand $LA_n(\textbf{x})$ when $\textbf{x}$ is
a sequence of random variables.

\begin{lemma}\label{lakey} \cite{romik} Let $\textbf{x}:=(x_i)_{i=1}^n$ be a sequence of distinct real numbers.
Then
\begin{eqnarray*}
  LA_n(\textbf{x}) &=& 1 + \mathbbm{1}(x_1>x_2) + \# \; local \; extremum \; of \; \textbf{x} \\
    &=& 1 + \mathbbm{1}(x_1>x_2) +
\sum_{k=2}^{n-1}\mathbbm{1}(x_{k-1}>x_k<x_{k+1})+\sum_{k=2}^{n-1}\mathbbm{1}(x_{k-1}<x_k>x_{k+1}).
\end{eqnarray*}
\end{lemma}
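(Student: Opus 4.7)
The plan is to reduce everything to a counting statement about the descent indicators $d_k := \mathbbm{1}(x_k > x_{k+1})$ for $1 \leq k \leq n-1$. For $k \in \{2,\ldots,n-1\}$, the distinctness of the $x_i$'s makes $x_k$ a local extremum precisely when $d_{k-1} \neq d_k$. Hence the right-hand side of the claimed identity equals $1 + d_1 + \#\{k \in [2,n-1] : d_{k-1} \neq d_k\}$, which in turn equals $d_1 + B$, where $B$ is the number of maximal constant blocks in $(d_1,\ldots,d_{n-1})$. Equivalently, $B$ is the number of maximal monotone runs $R_1,R_2,\ldots,R_B$ of $\textbf{x}$, which necessarily alternate between descending and ascending, with $R_1$ descending iff $d_1 = 1$. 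I would then reduce the lemma to showing $LA_n(\textbf{x}) = d_1 + B$.

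For the lower bound $LA_n(\textbf{x}) \geq d_1 + B$, I would give an explicit construction. If $d_1 = 1$, take the starting elements of all $B$ runs together with $x_n$; this yields $B+1$ terms that form an alternating subsequence beginning with a descent. If $d_1 = 0$, discard $x_1$ and instead take the starting elements of $R_2,\ldots,R_B$ together with $x_n$, giving $B$ terms that form an alternating subsequence that also begins with a descent (since $R_2$ is the first descending run). In both cases the length matches $d_1 + B$.

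The matching upper bound $LA_n(\textbf{x}) \leq d_1 + B$ is the main step. Given any alternating subsequence at positions $i_1 < \cdots < i_k$ with $x_{i_1} > x_{i_2} < x_{i_3} > \cdots$, I would consider the disjoint intervals $I_j = [i_j, i_{j+1}-1]$ for $j=1,\ldots,k-1$. A net descent (resp.\ ascent) over $I_j$ forces the existence of some $l_j \in I_j$ with $d_{l_j} = 1$ (resp.\ $d_{l_j} = 0$). The resulting witnesses satisfy $l_1 < l_2 < \cdots < l_{k-1}$ and the values $d_{l_j}$ strictly alternate starting from $1$, producing at least $k-2$ sign changes in the global sequence $(d_1,\ldots,d_{n-1})$. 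Since that sequence has exactly $B-1$ sign changes, this gives $k \leq B+1$; when $d_1 = 0$, one additional sign change is forced between index $1$ and $l_1$ (to pass from $d_1=0$ to $d_{l_1}=1$), tightening the bound to $k \leq B$. Together these cases yield $k \leq d_1 + B$.

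The chief obstacle is the asymmetric treatment of the cases $d_1=0$ and $d_1=1$ in the upper bound, which is exactly what the $\mathbbm{1}(x_1 > x_2)$ term in the statement is tracking; once that boundary case is handled correctly, the remainder is routine bookkeeping on sign changes of the binary descent sequence.
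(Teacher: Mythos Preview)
The paper does not actually prove this lemma; it simply cites \cite{houdre} and \cite{romik} and moves on. So there is no in-paper argument to compare against.

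Your proposed proof is correct and self-contained. Recasting the right-hand side as $d_1 + B$, where $B$ is the number of maximal constant blocks of the descent word $(d_1,\dots,d_{n-1})$, is exactly the right normalization: each interior local extremum corresponds to a sign change $d_{k-1}\neq d_k$, so $1 + d_1 + (\text{number of sign changes}) = d_1 + B$. Your lower bound construction (endpoints of the monotone runs) and your upper bound via alternating witnesses $l_1<\cdots<l_{k-1}$ inside the disjoint intervals $[i_j,i_{j+1}-1]$ both go through; the extra sign change forced before $l_1$ when $d_1=0$ is precisely what accounts for the asymmetric $\mathbbm{1}(x_1>x_2)$ term. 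One small point worth making explicit in a final write-up: the sign-change positions you extract between consecutive $l_j$'s (and, in the $d_1=0$ case, before $l_1$) lie in pairwise disjoint index ranges, so they really are distinct and can be compared to the total count $B-1$; you allude to this but it deserves one sentence.

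For context, the argument in \cite{romik} is essentially a greedy one: any longest alternating subsequence can be modified, without loss of length, so that its entries are exactly the local extrema together with the appropriate endpoint(s). Your block-counting formulation is equivalent but arguably cleaner for bookkeeping, since it reduces both directions to statements about a binary string.
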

\begin{example} Let $\textbf{x}=(3,1,7,4,2,6,5).$ Then the local maximums are
$\{x_3,x_6\}=\{7,6\}$ and the local minimums are
$\{x_2,x_5\}=\{1,2\}$. Noting that $x_1
> x_2$ and using Lemma \ref{lakey}, we get $LA_n(\textbf{x}) = 1+1+2+2=6.$ Indeed, the
subsequence  $(3,1,7,2,6,5)$ has length 6 and $\textbf{x}$ does not
have a longer  alternating subsequence.
\end{example}

Now we move on to discussing longest alternating subsequence of a
uniformly random permutation $\pi$. In this direction, \cite{houdre}
and \cite{romik} find the expectation and variance as
$$\mathbb{E}[LA_n(\pi)] = \frac{2n}{3} + \frac{1}{6} \quad \text{and} \quad Var(LA_n(\pi)) = \frac{8n}{45} - \frac{13}{180}.$$
They also prove asymptotic normality of $LA_n(\pi)$ by using an
alternative representation and the underlying local dependence. We
contribute to their result by obtaining convergence rates in the
Kolmogorov distance.

\begin{theorem}
Let $\pi$ be a uniformly random permutation in $S_n.$ Then for every
$n \geq 1$, $$d_K \left(\frac{LA_n(\pi) - \left(\frac{2n}{3} +
\frac{1}{6}\right)}{\sqrt{\frac{8n}{45} - \frac{13}{180}}}, Z\right)
\leq \frac{C}{\sqrt{n}}$$ where $C$ is a constant independent of
$n$.
\end{theorem}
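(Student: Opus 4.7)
The plan is to combine the introductory rank observation with Lemma \ref{lakey} to reduce $LA_n(\pi)$ to a sum of bounded $2$-dependent random variables, and then apply Theorem \ref{localdepresult} directly. Because $LA_n$ depends only on the relative order of its arguments, the initial observation from the introduction gives
$$LA_n(\pi) \;=_d\; LA_n(X_1,\dots,X_n),$$
where $X_1,\dots,X_n$ are i.i.d.\ $U(0,1)$. This lets us work with independent continuous random variables, so ties can be ignored and Lemma \ref{lakey} applies directly.

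Next I would use Lemma \ref{lakey} to write
$$LA_n(X_1,\dots,X_n) \;=\; 1 + \sum_{k=1}^{n-1} Y_k,$$
where $Y_1 = \mathbbm{1}(X_1>X_2)$ and, for $2\leq k\leq n-1$,
$Y_k = \mathbbm{1}(X_{k-1}>X_k<X_{k+1})+\mathbbm{1}(X_{k-1}<X_k>X_{k+1})$.
Each $Y_k$ is a function of at most three consecutive entries $X_{k-1},X_k,X_{k+1}$, hence $Y_k$ and $Y_\ell$ are independent whenever $|k-\ell|>2$. In other words, $\{Y_k\}$ is a $2$-dependent sequence, and $|Y_k|\leq 2$ for every $k$.

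Now let $\mu=\frac{2n}{3}+\frac{1}{6}$ and $\sigma^2=\frac{8n}{45}-\frac{13}{180}$ be the mean and variance from \cite{houdre} and \cite{romik}. Set $\widetilde Y_k=(Y_k-\mathbb{E}[Y_k])/\sigma$, so that $W=\sum_{k=1}^{n-1}\widetilde Y_k$ has mean $0$ and variance $1$, and $W=_d(LA_n(\pi)-\mu)/\sigma$. Then $\{\widetilde Y_k\}$ is still $2$-dependent, and $|\widetilde Y_k|\leq 4/\sigma$. Applying Theorem \ref{localdepresult} with $m=2$ and $p=3$ yields
$$d_K(W,Z)\;\leq\;75\cdot 21^{2}\sum_{k=1}^{n-1}\mathbb{E}|\widetilde Y_k|^3
\;\leq\; 75\cdot 21^{2}\cdot (n-1)\cdot \frac{64}{\sigma^3}\;=\;O(1/\sqrt{n}),$$
since $\sigma=\Theta(\sqrt n)$. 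This gives the claimed bound with an explicit constant $C$ independent of $n$.

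There is no real obstacle here: the only point that needs care is the bookkeeping in Step~2 to confirm that combining the two local-extremum indicators at the same index $k$ (rather than treating them as separate summands) preserves the $2$-dependence while keeping the summands bounded; everything else, including the asymptotic order of $\sigma$, is already available in the references.
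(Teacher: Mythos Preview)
Your proposal is correct and follows essentially the same route as the paper: replace $\pi$ by i.i.d.\ $U(0,1)$ variables, invoke Lemma \ref{lakey} to express $LA_n$ as $1$ plus a sum of locally dependent bounded indicators, and apply Theorem \ref{localdepresult} with $p=3$. The only cosmetic difference is that you (correctly) identify the sequence $\{Y_k\}$ as $2$-dependent, whereas the paper more loosely calls it $4$-dependent; either value yields the $O(1/\sqrt{n})$ bound.
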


\begin{proof}
Let $\pi$ be a uniformly random permutation and $X_1,...,X_n$
 be independent uniform random variables over
$(0,1)$. Letting
\begin{equation}\label{Eks}
    E_k = \{X_{k-1}>X_k<X_{k+1}\} \cup \{X_{k-1}<X_k>X_{k+1}\} \quad
    \text{for} \quad
 k=2,...,n-1,
\end{equation}
 we have
\begin{eqnarray}\label{tekilk}
  LA_n(\pi) &=& 1 + \mathbbm{1}(\pi(1)>\pi(2)) + \sum_{k=2}^{n-1}\mathbbm{1}(\pi(k-1)>\pi(k)<\pi(k+1)) \nonumber\\
   &+& \sum_{k=2}^{n-1}\mathbbm{1}(\pi(k-1)<\pi(k)>\pi(k+1)) \nonumber\\
    &=_d& 1 + \mathbbm{1}(X_1>X_2) + \sum_{k=2}^{n-1}\mathbbm{1}(X_{k-1}>X_k<X_{k+1}) +
    \sum_{k=2}^{n-1}\mathbbm{1}(X_{k-1}<X_k>X_{k+1}) \nonumber\\
    &=& 1 + \mathbbm{1}(X_1>X_2)+ \sum_{k=2}^{n-1}\mathbbm{1}(E_k)
\end{eqnarray}
where in the second equality, we used the discussion from the
Introduction (or  see  \cite{romik} for a precise statement). Now,
clearly $LA_n(\pi)$ is a sum of 4-dependent random variables and
result follows from Theorem \ref{localdepresult}.
\end{proof}
\begin{remark}
Using the representation of $LA_n(\pi)$ given in (\ref{tekilk}), one
can easily obtain a concentration inequality for  $LA_n(\pi)$ by
using, for example, McDiarmid's well known bounded differences
inequality \cite{Colin1}. By (\ref{tekilk}), we have
$$LA_n(\pi) =_d f(X_1,...,X_n) :=1 + \mathbbm{1}(X_1>X_2)
+\sum_{k=2}^{n-1}\mathbbm{1}(E_k)$$ where $X_i'$s are independent
random variables and $E_k$'s are defined in terms of $X_i$'s as in
(\ref{Eks}). Now by a case analysis, it is easy to see that bounded
differences property holds with $c_k=3$ for $k=1,...,n$ and one
immediately arrives at
\[\mathbb{P}(|LA_n(\pi)- \mu| \geq t) \leq 2e^{-2t^2/9n}.\]
\end{remark}

Next we will work on the same problem for riffle shuffles. First
note that, with its close connection to the number of extremum
points and number of runs, longest alternating subsequences can be
quite useful in non-parametric tests. Indeed, our motivation here
comes from the practical discussions of this issue in \cite{nass} on
cheating in card games.

We start by recalling the development of longest alternating
subsequences in random words given in \cite{houdre}. This time we
need to be careful about defining maxima and minima properly as we
may have repeated values in the sequence. We say that a sequence
$\textbf{x}= (x_1,...,x_n) \in [a]^n$, has a local minimum at $k,$
if (i) $x_k < x_{k+1}$ or $k=n$, and if (ii) for some $j<k$, $x_j >
x_{j+1} = ...=x_{k-1} = x_k$. Similarly, $\textbf{x}$ has a local
maximum at $k$, if $x_k
>x_{k+1}$ or $k=n$, and if (ii) for some $j < k$, $x_j<x_{j+1}
=...=x_{k-1} =x_k$, or for all $j<k$, $x_j = x_k.$ With these
definitions, a useful representation of $LA_n(\textbf{x})$ is found
by Houdre and Restrepo \cite{houdre} as
 $$LA_n(\textbf{x})= \# \; \text{of local maxima of $\textbf{x}$}+\# \; \text{of local minima of $\textbf{x}$}.$$

Letting $\mathbb{X}=(X_1,...,X_n)$ be a random word where $X_i'$s
are independent and uniform over $[a],$ they also show  that
$$\frac{LA_n(\mathbb{X}) - n (2/3-1/3a)}{\sqrt{n} \gamma} \longrightarrow_d
Z$$ as $n \rightarrow \infty$ where

\begin{equation}\label{gamma}
    \gamma^2 = \frac{8}{45} \left(\frac{(1+1/a)(1-3/4a) (1-1/2a)}{1-2/(a+1)}
    \right).
\end{equation}

(Note there is a typo in \cite{houdre} for the expression of
$\gamma^2$. This can be checked from \cite{mansour} by taking limits
in the corresponding variance formula). Now, Lemma \ref{import}, the
discussion just before it with Houdre and Restrepo's result
immediately gives

\begin{theorem}
Let $\rho_{n,a}$ be a random permutation with distribution
$P_{n,a}$. Then
$$\frac{LA_n(\rho_{n,a}) - n (2/3-1/3a)}{\sqrt{n} \gamma}
\longrightarrow_d N(0,1)$$ as $n \rightarrow \infty$ where $\gamma$
is as defined in (\ref{gamma}).
\end{theorem}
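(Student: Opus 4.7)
The plan is to reduce the theorem to the random-words CLT of Houdré and Restrepo via the inverse-shuffle construction. First, realize $\rho_{n,a}$ jointly with a random word $\mathbb{X} = (X_1, \ldots, X_n)$ of i.i.d.\ uniform digits on $[a]$ using the formula $\rho_{n,a}(i) = \#\{j : X_j < X_i\} + \#\{j \leq i : X_j = X_i\}$ stated just before Lemma \ref{import}. By that lemma, for $i < k$ one has $\rho_{n,a}(i) > \rho_{n,a}(k)$ if and only if $X_i > X_k$, so the strict descents of the two sequences coincide, while a tie $X_i = X_{i+1}$ is converted into a strict ascent of $\rho_{n,a}$.

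The second (and main) step is to show that, under this coupling, $LA_n(\rho_{n,a}) = LA_n(\mathbb{X})$ almost surely, where the right-hand side is computed using the generalized notions of local maxima and minima for words recalled just before the theorem. The point is that a maximal constant run $X_i = \cdots = X_{i+r-1}$ of $\mathbb{X}$ is turned by the formula above into a strictly ascending block $\rho_{n,a}(i) < \cdots < \rho_{n,a}(i+r-1)$. A short case analysis on the two neighbors of the run then shows that the total count of local extrema is preserved: when the neighbors lie on the same side of the plateau value, both $\rho_{n,a}$ (at the appropriate endpoint of the ascending block) and $\mathbb{X}$ (by the Houdré--Restrepo rule) contribute exactly one extremum, and when they lie on opposite sides, neither does. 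The boundary contributions at $k=1$ and $k=n$ are matched by the $1 + \mathbbm{1}(\rho_{n,a}(1) > \rho_{n,a}(2))$ term of Lemma \ref{lakey}.

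With this identity in hand, I invoke Houdré and Restrepo's central limit theorem for $LA_n(\mathbb{X})$ in uniformly random words on $[a]$, whose limiting variance is $\gamma^2$ as in (\ref{gamma}) (with the typo corrected via \cite{mansour}). Combined with the almost-sure equality from the previous step, this immediately gives the stated convergence for $LA_n(\rho_{n,a})$.

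The main obstacle is the bookkeeping in the second step: one must verify in each local configuration that passing from $\mathbb{X}$ to $\rho_{n,a}$ neither creates nor destroys a local extremum, taking care with Houdré--Restrepo's asymmetric endpoint conventions (positions of extrema in $\rho_{n,a}$ may shift relative to those in $\mathbb{X}$, but the overall count is invariant). Once this is settled, the CLT follows at once from the random-words result.
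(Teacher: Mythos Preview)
Your central claim---that under the inverse-shuffle coupling one has $LA_n(\rho_{n,a}) = LA_n(\mathbb{X})$ almost surely---is false for $a \geq 3$, and the error is exactly in your ``opposite sides'' case. When the left neighbor is \emph{above} the plateau value and the right neighbor is \emph{below} it (say $X_{i-1} > c = X_i = \cdots = X_{i+r-1} > X_{i+r}$ with $r \geq 2$), the word $\mathbb{X}$ indeed records no local extremum on this block under the Houdr\'e--Restrepo convention. But in $\rho_{n,a}$ the block becomes strictly ascending while both flanking comparisons are strict descents, so $\rho_{n,a}(i-1) > \rho_{n,a}(i) < \cdots < \rho_{n,a}(i+r-1) > \rho_{n,a}(i+r)$, producing a local minimum at $i$ \emph{and} a local maximum at $i+r-1$: two extrema, not zero. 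A concrete instance is $a=3$, $\mathbb{X}=(3,2,2,1)$, for which $\rho_{n,a}=(4,2,3,1)$; here $LA_4(\mathbb{X})=2$ while $LA_4(\rho_{n,a})=4$.

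This is not a negligible boundary effect. A short computation shows that the probability that position $k$ is a local extremum of $\rho_{n,a}$ equals $2(a^2-1)/(3a^2)$, so $\mathbb{E}[LA_n(\rho_{n,a})]\sim 2n(a^2-1)/(3a^2)$, whereas the Houdr\'e--Restrepo centering is $n(2a-1)/(3a)$; these agree only when $a=2$. Thus for $a\geq 3$ the two statistics differ by a quantity of order $n$, and your route of transferring the random-word CLT verbatim cannot yield the stated conclusion. (The paper's own one-line justification invokes the same identification and is subject to the same objection; note that the paper's subsequent Proposition for $a=2$ works precisely because the problematic ``down--plateau--down'' configuration cannot occur with a two-letter alphabet.)
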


This result can be generalized to biased shuffles as in previous
problems in a straightforward way. Asymptotic mean and variance of
this case are described in detail in \cite{houdre}. Also  note that,
due to the lack of local dependence, obtaining convergence rates is
not as easy as the case of uniform random permutations for $a$
shuffles and it will be studied  in a subsequent work. However, when
one focuses on $\rho_{n,2}$, one still has local dependence as we
describe in the rest of this section.

The ease of the 2-shuffle case comes from the following proposition
which gives a characterization of extremum points of 2-shuffles in
terms of the descents. Note that this result also gives the
asymptotic behavior of the number of local maxima or minima with the
use of Theorem \ref{descents}.

\begin{proposition}\label{charext} Let $\rho_{n,2,\textbf{p}}$ be a random permutation with distribution $P_{n,2,\textbf{p}}$ generated by inverse shuffling with
the random vector $\mathbb{X}=(X_1,...,X_n)$ where $X_i$'s are
independent with distribution $\textbf{p}=(p_1,p_2)$ with $0<p_1<1$.
Then for $k=2,...,n-1$,
\begin{itemize}
  \item[i.] $\rho_{n,2,\textbf{p}}$ has a local maximum at $k$ if and only if $\rho_{n,2,\textbf{p}}$ has a
descent at $k$.
  \item[ii.] $\rho_{n,2,\textbf{p}}$ has a local minimum at $k$ if and only if $\rho_{n,2,\textbf{p}}$ has a
descent at $k-1.$
\end{itemize}
\end{proposition}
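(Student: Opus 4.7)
The plan is to translate both ``local extremum at $k$'' and ``descent at $k$ or $k-1$'' into conditions on the underlying word $\mathbb{X}$ via Lemma \ref{import}, and then observe that because $X_i\in\{1,2\}$, any strict inequality of the form $X_i>X_j$ pins both entries down.

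Write $\rho:=\rho_{n,2,\textbf{p}}$. Since $\rho$ is a permutation, the values $\rho(k-1),\rho(k),\rho(k+1)$ are distinct, and the paper's general definitions of local extrema collapse to the usual ones: ``local max at $k$'' is $\rho(k-1)<\rho(k)$ and $\rho(k)>\rho(k+1)$, while ``local min at $k$'' is $\rho(k-1)>\rho(k)$ and $\rho(k)<\rho(k+1)$. Applying Lemma \ref{import} to the pairs $(k-1,k)$ and $(k,k+1)$ converts these into word conditions: for $i<j$, $\rho(i)>\rho(j)\iff X_i>X_j$, and by negation $\rho(i)<\rho(j)\iff X_i\le X_j$. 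Thus ``local max at $k$'' becomes ``$X_{k-1}\le X_k$ and $X_k>X_{k+1}$'', ``descent at $k$'' becomes ``$X_k>X_{k+1}$'', and the symmetric equivalences hold for minima and for descents at $k-1$.

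With this dictionary in place, both forward implications (local max $\Rightarrow$ descent at $k$; local min $\Rightarrow$ descent at $k-1$) are immediate. For the converses, I would argue as follows. Suppose there is a descent at $k$, i.e.\ $X_k>X_{k+1}$. Since $X_k,X_{k+1}\in\{1,2\}$, this forces $X_k=2$ and $X_{k+1}=1$; but then $X_{k-1}\le 2=X_k$ holds trivially, so the second condition for a local maximum is automatic. Symmetrically, a descent at $k-1$ gives $X_{k-1}=2$ and $X_k=1$, whence $X_k=1\le X_{k+1}$ is automatic and $k$ is a local minimum.

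There is really no obstacle beyond careful bookkeeping. The only subtle point is the asymmetric translation given by Lemma \ref{import}: strict on the descent side ($X_i>X_j$) but non-strict on the ascent side ($X_i\le X_j$). That asymmetry is exactly what makes the two-value case collapse, since a strict $\{1,2\}$-valued inequality fixes both endpoints and the remaining non-strict inequality is then forced. The hypothesis $0<p_1<1$ only guarantees nondegeneracy of $\mathbb{X}$; the equivalences themselves hold pointwise on every realization.
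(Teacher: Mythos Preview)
Your proof is correct and follows essentially the same approach as the paper. The paper argues directly in terms of the two piles of the riffle shuffle (a descent at $k$ forces the card at position $k$ to come from the second pile, whence the card at position $k-1$ is automatically smaller regardless of its pile), while you route the same observation through Lemma \ref{import} and the word $\mathbb{X}$; since $X_i$ records precisely which pile supplies position $i$, the two arguments coincide up to vocabulary.
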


\begin{proof}
\begin{itemize}
  \item[i.] ($\Rightarrow$) Obvious.  ($\Leftarrow$) Assume $\pi(k)>\pi(k+1)$. We should show
  $\pi(k-1)<\pi(k).$ Since $\pi(k)>\pi(k+1)$, we see that $k^{th}$
  card comes from the second pile and ${k+1}^{st}$ from the first
  pile. Now whether card $k-1$ comes from the first pile or the
  second pile, we have $\pi(k-1)<\pi(k)$ since the relative orders
  of the piles are preserved.
  \item[ii.] Proof is similar to the maximum case and we skip it.
\end{itemize}
\end{proof}

Now we are ready to give a  useful representation of
$LA_n(\rho_{n,2,\textbf{p}}).$ First recall that
\begin{eqnarray*}\label{laperm}
         LA_n(\pi) &=& 1 + \mathbbm{1}(\rho_{n,2,\textbf{p}}(1)>\rho_{n,2,\textbf{p}}(2)) + \sum_{k=2}^{n-1}\mathbbm{1}(\rho_{n,2,\textbf{p}}(k-1)>\rho_{n,2,\textbf{p}}(k)<\rho_{n,2,\textbf{p}}(k+1)) \\
          &+& \sum_{k=2}^{n-1}\mathbbm{1}(\rho_{n,2,\textbf{p}}(k-1)<\rho_{n,2,\textbf{p}}(k)>\rho_{n,2,\textbf{p}}(k+1)).
\end{eqnarray*}

Using Proposition \ref{charext}, we obtain
$$LA_n(\rho_{n,2,\textbf{p}}) =_d 1 +
\mathbbm{1}(X_1>X_2)+\sum_{i=2}^{n-1} \mathbbm{1}(X_i
>X_{i+1}) +\sum_{i=1}^{n-2} \mathbbm{1}(X_i >X_{i+1}) $$ where
$X_i$'s are independent with distribution $\textbf{p}=(p_1,p_2)$.
This immediately gives
\begin{equation}\label{lanrho2}
LA_n(\rho_{n,2,\textbf{p}}) =_d 2 \left(\sum_{i=1}^{n-1}
\mathbbm{1}(X_i
> X_{i+1})\right) +\mathbbm{1}(X_{n-1}<X_n).
\end{equation}

 By the representation in (\ref{lanrho2}), it is clear that we still have
  local dependence for $LA_n(\rho_{n,2,\textbf{p}})$ and thus, we can still use Theorem \ref{localdepresult} with $p=3$ to obtain
a   convergence rate of order $1/\sqrt{n}$ for
$LA_n(\rho_{n,2,\textbf{p}}).$

\section{Concluding Remarks}

In this note, after relating riffle shuffle statistics to random
word statistics, we were able to obtain asymptotic normality results
with convergence rates for the number of descents and inversions
after an arbitrary number of $a$-shuffles. Throughout the way, we
also discussed how similar ideas can be used for a variant of top
$m$ to random shuffles and provided small contributions to Houdre
and Restrepo's work on longest alternating subsequences.

In subsequent work, we will provide convergence rates for the length
of longest alternating subsequences in $a$-shuffles for $a \geq 2.$
We also hope to find out a general framework for establishing the
asymptotic normality of a large class of $a-$shuffle statistics. One
possible direction for this can be using the stochastic dominance
idea introduced in Remark \ref{stocdom} as in many cases it can be
easier to prove the results for 2-shuffles and uniformly random
permutations.

\end{document}